\newcommand{\R}{\mathbb{R}}
\def\vint_#1{\mathchoice%
          {\mathop{\kern 0.2em\vrule width 0.6em height 0.69678ex depth -0.58065ex
                  \kern -0.8em \intop}\nolimits_{\kern -0.4em#1}}%
          {\mathop{\kern 0.1em\vrule width 0.5em height 0.69678ex depth -0.60387ex
                  \kern -0.6em \intop}\nolimits_{#1}}%
          {\mathop{\kern 0.1em\vrule width 0.5em height 0.69678ex depth -0.60387ex
                  \kern -0.6em \intop}\nolimits_{#1}}%
          {\mathop{\kern 0.1em\vrule width 0.5em height 0.69678ex depth -0.60387ex
                  \kern -0.6em \intop}\nolimits_{#1}}}
\newcommand{\art}[6]{{\sc #1, \rm #2, \it #3 \bf #4 \rm (#5), \mbox{#6}.}}
\newcommand{\book}[3]{{\sc #1, \it #2, \rm #3.}}
\newcommand{\AND}{{\rm and }}
\newcommand{\p}{{$p\mspace{1mu}$}}
\newcommand{\Om}{\Omega}
\newcommand{\loc}{_{\rm loc}}
\newcommand{\eps}{\varepsilon}
\DeclareMathOperator*{\ext}{ext}
\DeclareMathOperator*{\inte}{int}
\theoremstyle{plain}
\newtheorem{theorem}[equation]{Theorem}
\newtheorem{lemma}[equation]{Lemma}
\numberwithin{equation}{section}
\theoremstyle{definition}
\theoremstyle{remark}
\newtheorem{remark}[equation]{Remark}
\title[Some remarks on sign changing solutions]{Some remarks on sign
  changing solutions of a quasilinear elliptic equation in two
  variables}
\author{Seppo Granlund} \address[S.G.]{University of Helsinki,
  Department of Mathematics and Statistics, P.O. Box 68, FI-00014
  University of Helsinki, Finland} \email{seppo.granlund@pp.inet.fi}
\author{Niko Marola} \address[N.M.]{University of Helsinki, Department
  of Mathematics and Statistics, P.O. Box 68, FI-00014 University of
  Helsinki, Finland} \email{niko.marola@helsinki.fi}
\begin{document}

\keywords{Dead core, Harnack inequality, maximum principle, nodal
  domain, \p-Laplace, \p-harmonic, quasilinear elliptic equation, sign
  changing solution}

\subjclass[2000]{Primary: 35J62; Secondary: 35J25, 35J92.}

\begin{abstract}
  We consider planar solutions to certain quasilinear elliptic
  equations subject to the Dirichlet boundary conditions; the boundary
  data is assumed to have finite number of relative maximum and
  minimum values. We are interested in certain vanishing properties of
  sign changing solutions to such a Dirichlet problem.
  Our method is applicable in the plane.
\end{abstract}

\maketitle

\section{Introduction and preliminaries}

In this paper we consider solutions of quasilinear second order
elliptic partial differential equations of the form
\begin{equation} \label{eq} \nabla\cdot \mathcal{A}(x,\nabla u)=
  \mathcal{B}(x,\nabla u),
\end{equation}
where $\mathcal{A}\colon\R^2\times\R^2 \to\R^2$ and
$\mathcal{B}\colon\R^2\times\R^2 \to\R$ are Carath\'eodory functions
under certain structural conditions discussed in
Section~\ref{sect:structural}. A noteworthy example of such equations
is the \p-Laplace equation
\begin{equation} \label{eq:pLap} 
\nabla\cdot(|\nabla u|^{p-2}\nabla u)=0,
\end{equation}
where $1<p<\infty$, which gives the Laplace equation when $p=2$; we
refer to \cite{Lindqvist}.

The result of this note is the following. Let $G$ be a bounded
simply-connected Jordan domain in $\R^2$. Suppose $u$ is a solution to
\eqref{eq} subject to the Dirichlet boundary condition
\[
u=g\,\textrm{ on }\,\partial G,
\]
where $g\in W^{1,p}(G)\cap C(\overline{G})$. We assume minimal
regularity conditions on $\partial G$ so that every boundary point is
regular, and hence $u\in C(\overline{G})$. If $g|_{\partial G}$ has
finite number of relative maximum and minimum values, and we assume
further that for all $x\in G$ there exists $r_x>0$ such that for all
$r\leq r_x$ the set $\{z\in B_r(x)\subset G:\, u(z)=0\}$ is connected,
then
if $u$ vanishes in some open subset of $G$, then $u$ vanishes
identically in $G$.

For linear equations the study of certain vanishing properties, unique
continuation in particular, is rather complete \cite{GarLinIndiana,
  GarLinCPAM}. The non-linear case, on the other hand, is more or less
open, although there are some results; we refer to
\cite{Alessandrini87}, \cite{Alessandrini}, \cite{ALR},
\cite{AlesSiga}, \cite{ArmSil}, \cite{BoIw}, \cite{Rossi},
\cite{Manfredi}, and \cite{Martio}. In \cite{Martio}, in particular,
some counterexamples are constructed in the case of $p=n$ in
\eqref{eq:structural}, $n\geq 3$, and $\mathcal{B}\equiv 0$. More
precisely, the author shows that there is a solution that vanishes in
the lower half space $x_n < 0$ of $\R^n$ but does not vanish
identically in $\R^n$.

Our approach in this note is based on the analysis of
nodal domains, which are maximal connected components of the set
\[
\{x\in G: u(x)\neq 0\},
\]
and nodal lines 
\[
\overline{\{x\in G: u(x)=0\}},
\]
which are the boundaries of nodal domains. Our main tool is to couple
the strong maximum principle and the Harnack inequality with some
topological arguments; this argument applies in the situation in which
there are finite number of nodal domains. 

The topological approach taken in the paper can be applied also to the
nonlinear eigenvalue problem involving the \p-Laplacian and to more
general eigenvalue problems constituting the Fu\v cik spectrum. We
refer to a recent paper \cite{GraMar3} for more details.

Finally, we want to refer to \cite{AlessandriniPisa} since the framework and
some ideas there are somewhat related to those taken in the present
note. 
%

\subsection{Notation} Throughout, $G$ is a bounded simply-connected
Jordan domain of $\R^2$. A domain is an open connected set in
$\R^2$. We write $B_r = B_r(x) = B(x,r)$ for concentric open balls of
radii $r>0$ centered at some $x\in G$. We denote the closure,
interior, exterior, and boundary of $E$ by $\overline{E}$, $\inte(E)$,
$\ext(E)$, and $\partial E$, respectively.

\subsection{Structural assumptions}
\label{sect:structural}

Let us specify the structure of $\mathcal{A}$ and $\mathcal{B}$ in
\eqref{eq}; We shall assume that there are constants $0<a_0\leq
a_1<\infty$ and $0<b_1<\infty$ such that for all vectors $h$ in $\R^2$
and almost every $x\in G$ the following structural assumptions apply
\begin{equation} \label{eq:structural} \left. \begin{array}{l}
      \mathcal{A}(x,h)\cdot h \geq a_0|h|^p,  \\
      |\mathcal{A}(x,h)|\leq a_1|h|^{p-1}, \\
      |\mathcal{B}(x,h)| \leq b_1|h|^{p-1}
    \end{array} \right\}
\end{equation}
where $1<p<\infty$. We do not assume the monotoneity or the
homogeneity of the operator $\mathcal{A}$ since we do not consider
existence or uniqueness problems.

The structural conditions \eqref{eq:structural} result in H\"older
continuity of a weak solution to \eqref{eq}, and moreover in the
Harnack inequality and the strong maximum principle, we refer to
Serrin~\cite{Serrin}.

We could also allow for the following structural conditions
\begin{equation} \label{eq:structural2} \left. \begin{array}{l}
      \mathcal{A}(x,h)\cdot h \geq a_0|h|^p,  \\
      |\mathcal{A}(x,h)|\leq a_1|h|^{p-1}, \\
      |\mathcal{B}(x,h)| \leq b_0|h|^p + b_1|h|^{p-1}
    \end{array} \right\}
\end{equation}
where $0<b_0<\infty$ and $1<p<\infty$. In this case local H\"older
continuity and the Harnack inequality for a locally bounded weak
solution of \eqref{eq} follow from Trudinger~\cite{Trudinger}.

We do not consider the case in which $\mathcal{A}$ and $\mathcal{B}$
may depend on $u$, or, for that matter, aim at the most general
structure in \eqref{eq:structural} or \eqref{eq:structural2}. We will
only need that solutions of \eqref{eq} are continuous, and satisfy the
Harnack inequality and the strong maximum principle.
%

\subsection{Some elements of the plane topology}
\label{sect:topology}

We recall a few facts about the topology of planar sets; a good
reference is \cite{Newman}. Let $\Omega$ be any domain in $\R^2$. A
Jordan arc is a point set which is homeomorphic with $[0,1]$, wheras a
Jordan curve is a point set which is homeomorphic with a circle. By
Jordan's curve theorem a Jordan curve in $\R^2$ has two complementary
domains, and the curve is the boundary of each component. One of these
two domains is bounded and this domain is called the interior of the
Jordan curve. A domain whose boundary is a Jordan curve is called a
Jordan domain.

As a related note, it is well known that the boundary of a bounded
simply-connected domain in the plane is connected. In the plane a
simply-connected domain $\Om$ can be defined by the property that all
points in the interior of any Jordan curve, which consists of points
of $\Om$, are also points of $\Om$ \cite{Nehari}.

A Jordan arc with one end-point on $\partial\Omega$ and all its other
points in $\Omega$, is called an end-cut. If both end-points are in
$\partial \Omega$, and the rest in $\Omega$, a Jordan arc is said to
be a cross-cut in $\Omega$. A point $x\in\partial\Omega$ is said to be
accessible from $\Omega$ if it is an end-point of an end-cut in
$\Omega$. Accessible boundary points of a planar domain are aplenty:
The accessible points of $\partial \Omega$ are dense in
$\partial\Omega$ \cite[p. 162]{Newman}.

We recall a few facts about connected sets and $\eps$-chains. If $x$
and $y$ are distinct points, then an $\eps$-chain of points joining
$x$ and $y$ is a finite sequence of points
\[
x=a_1,\, a_2,\,\ldots,\,a_k=y
\]
such that $|a_i-a_{i+1}|\leq\eps$, for $i=1,\,\ldots,\,k-1$. A set of
points is $\eps$-connected if every pair of points in it can be joined
by an $\eps$-chain of points in the set. A compact set $F$ in $\R^2$
is connected if and only if it is $\eps$-connected for every $\eps>0$
\cite[Theorem~5.1, p. 81]{Newman}. If a connected set of points in
$\R^2$ intersects both $\Omega$ and $\R^2\setminus\Omega$ it
intersects $\partial \Omega$ \cite[Theorem~1.3, p. 73]{Newman}.

Lastly, we recall the following topological property
\cite[p. 159]{Newman}. A subset $E$ of $\R^2$ is said to be locally
connected at any $x\in\R^2$ if for every $\eps>0$ there exists
$\delta>0$ such that any two points of $B_\delta(x)\cap E$ are joined
by a connected set in $B_\eps(x)\cap E$. A set is uniformly locally
connected, if for every $\eps>0$ there exists $\delta>0$ such that all
pairs of points, $x$ and $y$, for which $|x-y|<\delta$ can be joined
by a connected subset of diameter less than $\eps$. All convex domains
and, more generally, Jordan domains are uniformly locally connected
\cite[Theorem 14.1, p. 161]{Newman}. However, simply-connected domains
are not necessarily locally connected.

\section{Vanishing properties and nodal domains}

We may interpret equation \eqref{eq} in the weak sense. We recall that
it follows from the structural assumptions \eqref{eq:structural} (or
\eqref{eq:structural2}) that a weak solution to \eqref{eq} is H\"older
continuous and satisfies the following Harnack inequality. We refer to
Serrin~\cite{Serrin}. The proof is based on the Moser iteration method
\cite{Moser}.

\begin{theorem}[Harnack's inequality] \label{thm:Harnack} Suppose $u$
  is a non-negative solution to \eqref{eq} in $B_{3r}\subset G$. Then
\[
\sup_{B_r} u\leq C\inf_{B_r} u,
\]
where $C=C(p,\,a_0,\,a_1,\,b_1)$.
\end{theorem}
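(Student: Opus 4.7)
The plan is to establish this via the classical Moser iteration scheme, which is well adapted to the structural conditions \eqref{eq:structural}. The first step is to derive a Caccioppoli-type energy inequality by testing the weak formulation of \eqref{eq} against $\pip = \eta^p (u+\delta)^{\beta}$, where $\eta \in C_c^\infty(B_{3r})$ is a standard cutoff function, $\delta>0$ is an approximation parameter (needed since we only know $u\geq 0$), and $\beta \in \R\setminus\{0\}$. Expanding $\nabla\pip$, using the ellipticity bound $\mathcal{A}(x,\nabla u)\cdot\nabla u \geq a_0|\nabla u|^p$, the growth bound $|\mathcal{A}(x,\nabla u)|\leq a_1|\nabla u|^{p-1}$, the bound $|\mathcal{B}(x,\nabla u)|\leq b_1|\nabla u|^{p-1}$, and absorbing cross terms by Young's inequality, one obtains an inequality of the form
\[
\int \eta^p (u+\delta)^{\beta-1}|\nabla u|^p \, dx \;\leq\; C(p,a_0,a_1,b_1,\beta)\int |\nabla \eta|^p (u+\delta)^{\beta+p-1}\, dx
\]
(with the correct sign structure depending on whether $\beta > 0$ or $\beta<0$). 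Equivalently, setting $v = (u+\delta)^{(\beta+p-1)/p}$, this reads $\int |\nabla (\eta v)|^p \lesssim \int (|\nabla \eta|^p + \eta^p) v^p$.

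Next, I would couple this with the Sobolev embedding $W^{1,p}_0 \hookrightarrow L^{p^*}$ (or, in two dimensions, with any $L^q$, $q<\infty$, together with a reduction trick; one may alternatively use the Sobolev inequality with a fixed exponent $\kappa>1$) to obtain a reverse-Hölder type estimate
\[
\Bigl(\vint_{B_{\rho'}} (u+\delta)^{\kappa(\beta+p-1)}\Bigr)^{1/\kappa} \leq \frac{C}{(\rho-\rho')^p}\vint_{B_{\rho}} (u+\delta)^{\beta+p-1},
\]
valid for nested balls $B_{\rho'}\subset B_\rho\subset B_{2r}$ and admissible $\beta$. Iterating this inequality along geometrically shrinking balls with exponents $\kappa^j(\beta+p-1)$ and letting $\delta\to 0$ yields, in the standard Moser manner, the two one-sided bounds: for any $q_0>0$,
\[
\sup_{B_r} u \leq C\Bigl(\vint_{B_{2r}} u^{q_0}\Bigr)^{1/q_0}, \qquad \Bigl(\vint_{B_{2r}} u^{-q_0}\Bigr)^{-1/q_0} \leq C \inf_{B_r} u.
\]

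The crux, and the main obstacle, is to bridge these two estimates when only one exponent $q_0$ is available on each side; this is the well-known $\log u$ step. I would test \eqref{eq} against $\pip = \eta^p (u+\delta)^{1-p}$, which after exploiting the structural bounds produces a BMO-type estimate for $w := \log(u+\delta)$, namely
\[
\vint_{B_\rho} |w - w_{B_\rho}|\,dx \leq C
\]
for balls $B_{2\rho}\subset B_{3r}$, uniformly in $\delta$. An application of the John–Nirenberg lemma then yields a small exponent $q_0>0$ and a constant $M$ with
\[
\vint_{B_{2r}} e^{q_0 w}\,dx \cdot \vint_{B_{2r}} e^{-q_0 w}\,dx \leq M.
\]
Combining this with the two one-sided Moser bounds above gives $\sup_{B_r}u \leq C \inf_{B_r} u$ with $C$ depending only on $p, a_0, a_1, b_1$. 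I would note that the argument carries over verbatim (with a slight modification to the Caccioppoli step that absorbs the extra $b_0|h|^p$ term via Young's inequality) to the structure \eqref{eq:structural2}, recovering Trudinger's version; this is where the main technical care is required.
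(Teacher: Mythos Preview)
The paper does not actually prove Theorem~\ref{thm:Harnack}; it simply quotes the result from Serrin~\cite{Serrin}, remarking only that ``the proof is based on the Moser iteration method~\cite{Moser}'' (and citing Trudinger~\cite{Trudinger} for the variant under structure~\eqref{eq:structural2}). Your proposal is a correct outline of precisely that Moser scheme---Caccioppoli estimate from the test function $\eta^p(u+\delta)^\beta$, Sobolev-plus-iteration for the two one-sided bounds, and the $\log u$/John--Nirenberg step to bridge them---so it is fully consistent with what the paper invokes, just considerably more detailed than anything the paper itself supplies.
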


Having the structure \eqref{eq:structural2} in \eqref{eq},
Theorem~\ref{thm:Harnack} can be found in
Trudinger~\cite{Trudinger}. Moreover, in this case we shall assume
that a weak solution $u$ is locally bounded.

We also point out the following important property, the strong maximum
principle, which can be deduced from the Harnack inequality. We refer
to a monograph by Pucci and Serrin~\cite{PucciSerrin} on maximum
principles.

\begin{theorem}[Strong maximum principle]
  Suppose $u$ is a non-constant solution to \eqref{eq} in $G$. Then
  $u$ cannot attain its maximum at an interior point of $G$.
\end{theorem}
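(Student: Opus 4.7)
The plan is to argue by contradiction: assume that $u$ attains its maximum $M:=u(x_0)$ at an interior point $x_0\in G$, and deduce that $u\equiv M$ in $G$. The natural device is to consider the translated function $v:=M-u$, which is non-negative throughout $G$ and vanishes at $x_0$. If $v$ satisfies a weak equation of the same form as \eqref{eq} with coefficients obeying \eqref{eq:structural}, then Theorem~\ref{thm:Harnack} applied on any ball $B_{3r}(x_0)\subset G$ gives
\[
\sup_{B_r(x_0)}v\leq C\inf_{B_r(x_0)}v\leq C\,v(x_0)=0,
\]
and combined with $v\geq 0$ this forces $v\equiv 0$ on $B_r(x_0)$, that is, $u\equiv M$ on $B_r(x_0)$.

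With this local assertion in hand, I would close the argument by a standard connectedness step. Set $E:=\{x\in G:u(x)=M\}$. The set $E$ is nonempty (it contains $x_0$) and closed in $G$ by continuity of $u$. The local step above, applied at each $y\in E$ in place of $x_0$, shows that $E$ is also open in $G$. Since $G$ is connected, $E=G$, hence $u\equiv M$, contradicting the assumption that $u$ is non-constant.

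The main obstacle is justifying the applicability of Theorem~\ref{thm:Harnack} to $v$, i.e., verifying that $v$ is a weak solution of an equation with the required structure. Since $\nabla v=-\nabla u$, a direct substitution in the weak formulation of \eqref{eq} shows that $v$ solves $\nabla\cdot\tilde{\mathcal A}(x,\nabla v)=\tilde{\mathcal B}(x,\nabla v)$ with $\tilde{\mathcal A}(x,h):=-\mathcal A(x,-h)$ and $\tilde{\mathcal B}(x,h):=-\mathcal B(x,-h)$. One then checks that these transformed coefficients inherit the bounds in \eqref{eq:structural} from the symmetry $|-h|=|h|$: for instance $\tilde{\mathcal A}(x,h)\cdot h=\mathcal A(x,-h)\cdot(-h)\geq a_0|h|^p$, and the growth bounds on $\tilde{\mathcal A}$ and $\tilde{\mathcal B}$ follow immediately. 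An analogous verification works under \eqref{eq:structural2}, so the reduction is routine but is the only place where the precise form of the structural assumptions is used.
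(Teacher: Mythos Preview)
Your argument is correct and is precisely the route the paper indicates: it does not give a detailed proof but states that the strong maximum principle ``can be deduced from the Harnack inequality'' and, just below, notes that if $u$ solves \eqref{eq} then $-u+c$ solves an equation of the same type, which is exactly your reduction $v=M-u$ together with the verification that $\tilde{\mathcal A}(x,h)=-\mathcal A(x,-h)$ and $\tilde{\mathcal B}(x,h)=-\mathcal B(x,-h)$ satisfy \eqref{eq:structural} (or \eqref{eq:structural2}). The local Harnack step plus the open--closed connectedness argument is the standard completion, so nothing is missing.
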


We shall make use of the fact that if $u$ is a solution to \eqref{eq},
then $-u+c$, $c\in\R$, is also a solution to an equation similar to
\eqref{eq}. Hence Harnack's inequality and the strong maximum
principle apply to both $u$ and $-u+c$.

Our main result is the following theorem. We assume minimal regularity
conditions on $\partial G$ so that every boundary point is regular,
and hence $u\in C(\overline{G})$.

\begin{theorem} \label{thm:uniquecont} Suppose $u$ is a solution to
  the equation \eqref{eq} under structural conditions
  \eqref{eq:structural} (or \eqref{eq:structural2}, and $u$ locally
  bounded) in a bounded simply-connected Jordan domain $G$ of $\R^2$
  subject to the Dirichlet condition 
  \[
  u=g \quad \textrm{on} \quad \partial G, 
  \]
  where $g\in W^{1,p}(G)\cap C(\overline{G})$. We assume further that
  for all $x\in G$ there exists $r_x>0$ such that for all $r\leq r_x$
  the set $\{z\in B_r(x)\subset G:\, u(z)=0\}$ is connected. If
  $g|_{\partial G}$ has finite number of relative maximum and minimum
  values, and
  if $u$ vanishes in some open subset of $G$, then $u$ vanishes
  identically in $G$.
\end{theorem}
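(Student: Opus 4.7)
The plan is to argue by contradiction: let $V := \inte\{x \in G : u(x) = 0\}$ be the maximal open subset of $G$ on which $u$ vanishes, nonempty by hypothesis, and suppose $V \subsetneq G$. By connectedness of $G$ there is a point $x_0 \in \partial V \cap G$, with $u(x_0) = 0$ by continuity. I would first show that every nodal domain $D$ (connected component of $G \setminus \{u = 0\}$) satisfies $\overline{D} \cap \partial G \neq \emptyset$: otherwise $u$ vanishes on $\partial D \subset G$ but has constant nonzero sign on $D$, so the extremum of $u$ over $\overline{D}$ lies at an interior point of $G$, violating the strong maximum principle applied to $u$ (or to a suitable constant shift of it, which solves an equation of the same structural type).

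Next, I would use the finiteness of the relative extrema of $g$ to bound the number of nodal domains. Since $g$ has only finitely many relative maxima and minima on $\partial G$, the sets $\{g > 0\} \cap \partial G$ and $\{g < 0\} \cap \partial G$ split into finitely many open arcs. By continuity of $u$ up to the boundary and the uniform local connectedness of the Jordan domain $G$, a thin one-sided collar inside $G$ of each positive arc is connected and lies in $\{u > 0\}$, hence in a single positive nodal domain. Conversely, by the strong maximum principle and the previous step, every positive nodal domain touches $\partial G$ at some point where $g > 0$, and therefore ``owns'' at least one positive arc; the same holds with signs reversed. Thus the nodal domains form a finite collection $D_1,\dots,D_m$.

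The closing step would be a one-signed Harnack argument at a well-chosen $x_0 \in \partial V \cap G$. For $r \leq r_{x_0}$, the set $\{u = 0\} \cap B_r(x_0)$ is connected by hypothesis; I aim to pick $x_0$ so that for some $r$ the ball $B_r(x_0)$ meets nodal domains of only one sign, say $u \geq 0$ on $B_r(x_0)$. Since $u(x_0) = 0$, Theorem~\ref{thm:Harnack} then forces $u \equiv 0$ on a smaller concentric ball, placing $x_0$ in the interior of $\{u = 0\}$ --- that is, in $V$ --- contradicting $x_0 \in \partial V$.

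The main obstacle is producing such a one-signed point on $\partial V \cap G$. Not every boundary point of $V$ qualifies, since positive and negative nodal domains could both accumulate at $x_0$ and destroy the one-sided sign needed for Harnack. The plan is to combine the finiteness from the second step, the local connectedness of $\{u = 0\}$, and Jordan-curve-type arguments inside the simply connected $G$ to show that some point of $\partial V \cap G$ is adjacent to nodal domains of only one sign. Heuristically, the connected pieces of $\{u = 0\}$ bounding $V$ separate $G$ into finitely many regions, each contained in a single $D_j$, so ``around'' $V$ one must find an arc on the nodal boundary faced on one side by only one of the finitely many $D_j$; turning this intuition into a rigorous combinatorial-topological selection, using only local connectedness of the nodal set, is the principal technical challenge.
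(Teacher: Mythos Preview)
Your setup, the reduction to a boundary point $x_0\in\partial V\cap G$, and the easy case (a one-signed neighborhood at $x_0$ plus Harnack) all match the paper. Your finiteness argument for nodal domains is different from the paper's Lemma~\ref{lemma:numbernodal} (which shows that the global maximum of $u$ on each $\overline{N_i^+}$ is actually a \emph{relative maximum} of $g$ on $\partial G$, rather than counting sign arcs of $g$), but your arc-counting version is also plausible.

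The genuine gap is exactly the one you flag: you never produce the one-signed point, and the paper does \emph{not} attempt to produce one either. The paper instead treats the hard case---where every $x\in\partial V\cap G$ has both a positive and a negative nodal domain accumulating at it---head on, by a Jordan-curve argument that is the real content of the proof. Using finiteness, one fixes $N_s^+$ accumulating at two distinct accessible points $x_1,x_2\in\partial V\cap G$, picks nearby accessible points $x_3,x_4\in\partial N_s^+$, joins $x_1$ to $x_2$ by a cross-cut $\gamma_V$ in $V$ and $x_3$ to $x_4$ by a cross-cut $\gamma_{N_s^+}$ in $N_s^+$, and closes these into a Jordan curve $\Gamma^\eps$ with short arcs built from $\eps$-chains along the continuum $\partial N_s^+$ (the local-connectedness hypothesis on $\{u=0\}$ is used precisely to keep these connecting arcs inside small balls). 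The bounded Jordan domain $T_\eps$ enclosed by $\Gamma^\eps$ must contain a point $y$ of some $N_{j_0}^-$ (else $u\ge 0$ on $T_\eps$ and Harnack forces $u\equiv 0$ there, impossible near $\gamma_{N_s^+}$); since $\partial N_{j_0}^-$ meets $\partial G$ while $\Gamma^\eps\subset G$, there is also $z\in N_{j_0}^-\setminus\overline{T}_\eps$. An arc $\gamma_{zy}\subset N_{j_0}^-$ must then cross $\Gamma^\eps$: crossing $\gamma_V$ or $\gamma_{N_s^+}$ is impossible by sign, and crossing one of the $\eps$-arcs forces, as $\eps\to 0$, a limit point on $\gamma_{zy}$ where $u=0$, again impossible. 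So rather than searching for a one-signed boundary point, which may not exist, you should build $\Gamma^\eps$ and run this crossing argument.
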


We could also state the result as follows: If $u$ is a constant in
some open subset of $G$, then $u$ is identically constant in $G$. In
what follows, however, we stick to the classical formulation by
dealing with a vanishing solution.

The crux of the proof is to study so-called nodal domains. A maximal
connected component, i.e. one that is not a strict subset of any other
connected set, of the set $\{x\in G: u(x)\neq 0\}$ is called, in what
follows, a nodal domain. We denote these components by
\[
N_i^+=\{x\in G: u(x)>0\}, \quad \textrm{and} \quad N_j^-=\{x\in G:
u(x)<0\},
\]
where $i,j=1,2,\ldots\,$. We remark that if $u$ is, for instance, a
solution to the \p-Laplace equation \eqref{eq:pLap} it is not known
whether the number of nodal domains is finite.

In the proof of the following key lemma, we make use of the fact that
$G$ is a Jordan domain, and more precisely, $G$ is uniformly locally
connected at every $x\in \partial G$, we refer to
Section~\ref{sect:topology}.

\begin{lemma} \label{lemma:numbernodal} Let the hypothesis of
  Theorem~\ref{thm:uniquecont} be satisfied. Then the number of nodal
  domains, $N_i^+$ and $N_j^-$, is finite.
\end{lemma}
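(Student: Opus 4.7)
The plan is to inject the set of nodal domains into the (finite) set of open arcs of $\partial G$ on which $g$ has constant sign. Because $g|_{\partial G}$ has only finitely many relative extrema, the Jordan curve $\partial G$ decomposes into finitely many monotone arcs, each containing at most one zero of $g$; consequently the open sets $\{g>0\}\cap\partial G$ and $\{g<0\}\cap\partial G$ are finite disjoint unions of open arcs, call them $A_1^+,\dots,A_m^+$ and $A_1^-,\dots,A_l^-$. It suffices to exhibit a well-defined surjection from each family of arcs onto the corresponding family of nodal domains.

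To show each positive nodal domain $N^+$ picks up some $A_i^+$, I would first prove that $\overline{N^+}\cap\partial G$ contains a point where $g>0$. Set $M:=\sup_{N^+} u>0$; since $u$ is continuous on the compact set $\overline{N^+}\subset\overline{G}$, this supremum is attained. If it were attained at an interior point of $N^+$, the strong maximum principle applied inside the subdomain $N^+$ would force $u\equiv M$ on $N^+$, hence by continuity $u\equiv M$ on $\overline{N^+}$; but any point of $\partial N^+\cap G$ satisfies $u=0$, contradicting $M>0$ (the degenerate case $\partial N^+\cap G=\emptyset$ forces $N^+=G$, and then the conclusion is immediate). Hence the supremum is attained on $\partial N^+$; since $u=0$ on $\partial N^+\cap G$, it is attained on $\partial N^+\cap\partial G$, yielding a point with $g=M>0$. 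The negative case is symmetric, applied to $-u$.

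To make the assignment well-defined and injective, I would show that each $A_i^+$ is contained in the closure of exactly one positive nodal domain. Given any $x\in A_i^+$, continuity of $u$ and $g(x)>0$ yield $\rho>0$ with $u>0$ on $B_\rho(x)\cap\overline{G}$. Since $G$ is a Jordan domain it is uniformly locally connected at $x\in\partial G$ (see Section~\ref{sect:topology}), so one finds $\delta>0$ such that any two points of $B_\delta(x)\cap G$ can be joined by a connected subset of $B_\rho(x)\cap G$; all of $B_\delta(x)\cap G$ therefore lies in a single positive nodal domain, call it $N^+(x)$. A direct overlap argument shows that $x\mapsto N^+(x)$ is locally constant on $A_i^+$, so by connectedness of the arc it is constant along $A_i^+$. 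Combining with the previous paragraph, each positive nodal domain arises as $N^+(x)$ for some $x$ on an $A_i^+$ it meets, so there are at most $m$ of them, and similarly at most $l$ negative ones.

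The step I expect to be the main obstacle is applying the strong maximum principle inside the subdomain $N^+$, whose boundary in $G$ may be topologically complicated, and carefully handling the edge case $N^+=G$. Once one accepts that the principle is a local statement at interior points of $N^+$ and that continuity of $u$ on $\overline{G}$ propagates to $\overline{N^+}$, the rest of the argument reduces to the planar topology of Jordan domains recalled in Section~\ref{sect:topology}; notably, the interior connectedness hypothesis on the zero set does not seem necessary for this finiteness statement, although it will certainly matter for the main theorem.
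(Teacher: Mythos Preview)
Your argument is correct and rests on the same two pillars as the paper's proof: the strong maximum principle forces the supremum of $u$ over a positive nodal domain to be attained on $\partial G$, and uniform local connectedness of the Jordan domain $G$ at boundary points then shows that a small $B_\delta(x_0)\cap G$ lies entirely in a single nodal domain. The only difference is the target of the injection. The paper pushes the local-connectedness step a bit further to show that the global maximum point $x_0$ of $u$ on $\overline{N_i^+}$ is itself a \emph{relative maximum} of $g$ on $\partial G$, and then injects nodal domains directly into the finite set of relative extrema. You instead inject into the finite family of sign-constant boundary arcs $A_i^{\pm}$, which only requires $g(x_0)>0$ rather than the stronger relative-maximum conclusion, at the cost of an extra locally-constant argument along each arc. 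Both bookkeepings are fine; yours is arguably a little more robust to ambiguities about what counts as a ``relative maximum'' when $g$ has flat stretches. Your closing remark that the interior connectedness hypothesis on the zero set is not needed here is also exactly what the paper notes in the Remark following the lemma.
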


\begin{remark}
  The extra assumption, for all $x\in G$ there exists $r_x>0$ such
  that for all $r\leq r_x$ the set $\{z\in B_r(x)\subset G:\,
  u(z)=0\}$ is connected, in Theorem~\ref{thm:uniquecont} can be
  omitted in Lemma~\ref{lemma:numbernodal}.
\end{remark}

\begin{proof}[Proof of Lemma~\ref{lemma:numbernodal}]
  We note first that $u$ vanishes on all nodal lines in $G$, i.e. on
  $\partial N_i^+\cap G$ and $\partial N_j^-\cap G$. Hence by the
  strong maximum principle each nodal line meets the boundary of $G$.

  By the strong maximum principle the set $\partial N_i^+\cap\partial
  G$ contains a global maximum of $u$ in $N_i^+$. We then show that
  such maximum point $x_0\in \partial N_i^+$ is also a relative
  maximum of $g$ on $\partial G$: Let $x_0\in \partial G$ be a maximum
  point of $u$ on some fixed nodal domain $N_i^+$. We shall then apply
  local connedtedness of $G$ at every boundary point $x\in\partial G$
  (Section~\ref{sect:topology}). We claim next that there exists
  $\delta_{x_0}>0$ such that $B_\delta(x_0)\cap G$, for each
  $\delta<\delta_{x_0}$, contains only points of $N_i^+$. But assume,
  for now, that this is not the case. Hence for each
  $\delta<\delta_{x_0}$ there exists $\tilde x\in B_\delta(x_0)\cap G$
  such that $\tilde x$ belongs to some other nodal domain than
  $N_i^+$, say, $N_j^-$ or $u(\tilde x)=0$. Obviously, we need to
  consider only the former case.

  We write $u(x_0)=\max_{x\in N_i^+}u(x)=\sigma>0$. There exists a
  positive $\tilde\delta<\delta_{x_0}$ such that
  \[
  u(x) > \frac{\sigma}{2}
\]
for every $x\in \partial N_i^+\cap \partial G\cap
B_{\tilde\delta}(x_0)$ (it can be verified that such points exist
since $G$ is a Jordan domain). Moreover, since there exists a point
$\tilde x \in B_{\tilde\delta}(x_0)\cap G$ such that $\tilde x\in
N_j^-$ and $G$ is locally connected at every $x\in \partial G$, there
must exist also a point $\bar x\in B_{\tilde\delta}(x_0)\cap G$ so
that $u(\bar x)=0$. For small enough $\tilde\delta$ this is not
possible since $u$ is continuous and $u(x_0)>0$.

We have therefore obtained that there exists a positive $\delta_{x_0}$
such that $B_\delta(x_0)\cap G$, $\delta<\delta_{x_0}$, contains only
points of $N_i^+$.

It follows that the inequality 
\[
u(x)\leq u(x_0)
\]
is valid both for every $x\in B_\delta(x_0)\cap \partial N_i^+$ and
for every $x\in B_\delta(x_0)\cap \partial G$ (in fact
$B_\delta(x_0)\cap \partial N_i^+ = B_\delta(x_0)\cap \partial G$ as
sets). Hence each maximum point $x_0\in \partial N_i^+$ constitutes a
relative maximum of $g$ on $\partial G$. An analogous reasoning
applies for minima and relative minima on $N_j^-$ and $\partial G$,
respectively.

Since $g$ is assumed to possess only finite number of relative maxima
and minima on $\partial G$, the number of nodal domains must be
finite.
\end{proof}

Our idea in the proof of the preceding lemma has certain similarity to
that of Lemma 1.1 in Alessandrini~\cite{AlessandriniPisa} where the
number of interior critical points was considered to solution of
linear equations.

The following proof resembles the argument presented in a recent paper
by the authors, we refer to \cite{GraMar3} for more details.

\begin{proof}[Proof of Theorem~\ref{thm:uniquecont}]
  We assume for contradiction that \smallskip
  \begin{enumerate}
  \item[(A)] $u$ vanishes in a maximal open set $D\subset G$ but is
    not identically zero in $G$.
  \end{enumerate}
  The maximal open set $D$ is formed as follows: for every $x\in G$
  for which there exists an open neighborhood such that $u\equiv 0$ on
  this neighborhood we denote by $B(x,r_x)$, $r_x=\sup\left\{t>0:
    u|_{\partial B(x,t)}\equiv 0\right\}$, the maximal open
  neighborhood of $x$ where $u$ vanishes identically. Then the maximal
  open set $D$ is simply the union of all such neighborhoods. We pick
  a connected component of $D$, still denoted by $D$.

  It is worth noting that (A) implies that the boundary data function
  $g$ changes sign at least once on $\partial G$.

  By Lemma~\ref{lemma:numbernodal}, there exist positive
  $M^+,\,M^-<\infty$ such that we may index the nodal domains
  $i=1,\ldots, M^+$ and $j=1,\ldots, M^-$.

  Each nodal domain is simply-connected which can be seen as
  follows. Suppose that $N_i^+$ is not simply-connected for some
  $i\in\{1,\ldots,M^+\}$. Then there exists a Jordan curve
  $\gamma\subset N_i^+$ with its interior $S_\gamma$ and $S_\gamma$
  contains points which do not belong to the fixed nodal domain
  $N_i^+$. Moreover, $S_\gamma\subset G$ since $G$ is assumed to be
  simply-connected. It follows that the set $E=\{x\in
  S_\gamma\setminus N_i^+: u(x)\leq 0 \textrm{ or } u(x)>0\}$ is
  non-empty. If $\tilde E=\{x\in S_\gamma: u(x)<0 \textrm{ or }
  u(x)>0\}$ was empty, then $u(x)=0$ for all $x\in E$, and $u(x)>0$
  for all $x\in S_\gamma\setminus E$. This is impossible by Harnack's
  inequality, Theorem~\ref{thm:Harnack}. Hence $N_i^+$ is
  simply-connected.

  We consider next the case in which $\tilde E=\{x\in S_\gamma: u(x)<0
  \textrm{ or } u(x)>0\}\subset E$ is non-empty. It suffices to
  consider only the points at which $u<0$ (the points at which $u>0$
  are handled in the same way); this set is still denoted by $\tilde
  E$. The set $\tilde E$ is open and each component of $\tilde E$ is a
  subset of some nodal domain $N_j^-$. This contradicts with the fact
  that each nodal line meets $\partial G$. Hence $N_i^+$ is
  simply-connected.

  An analogous, symmetric, reasoning applies to $N_j^-$. Hence
  $\partial N_i^+$ and $\partial N_j^-$ are connected as the
  boundaries of simply-connected domains, and thus continua,
  i.e. compact connected sets with at least two points, for each $i$
  and $j$.

  Suppose next there exists a point $x\in\partial D\cap G$ and its
  neighborhood $B_\delta(x)$, $\delta>0$, such that
  $\overline{B}_\delta\subset G$ and $B_\delta(x)\cap\ext(D)$ contains
  only points of either $N_i^+$ or $N_j^-$ for some $i$ and $j$,
  i.e. points at which either $u>0$ or $u<0$. Assume, without loss of
  generality, that $B_\delta(x)\cap\ext(D)$ contains points of $N_i^+$
  only. Then $u\geq 0$ on $B_\delta(x)$ and by Harnack's inequality,
  Theorem~\ref{thm:Harnack}, $u\equiv 0$ on $B_{\delta/2}(x)$, which
  contradicts the maximality of the set $D$, and hence also the
  antithesis (A). In this case our claim follows.

  By the preceding reasoning it is sufficient to consider the
  following situation. For any $x\in\partial D\cap G$ and for any
  $\delta<\delta_0$, $\delta_0>0$, the neighborhood
  $B_\delta(x)\subset G$ contains points of the nodal domains $N_i^+$
  and $N_j^-$ for some indices $i$ and for some indices $j$.

  We point out that there exist a fixed index pair $(s,t)\in
  \{1,\ldots,M^+\}\times\{1,\ldots, M^-\}$ and $\delta_0>0$ such that
  each $B_\delta(x)$ contains points of $N_s^+$ and $N_t^-$, but there
  might be also points of other nodal domains in $B_\delta(x)$, for
  every $\delta<\delta_0$; this is a consequence of the fact that the
  number of nodal domains is finite in our case. We reason as follows:
  We consider a point $x\in\partial D\cap G$ and $B_\delta(x)$,
  $\delta<\delta_0$. We then select a decreasing sequence $\{\delta_i\}$
  such that $\delta_i<\delta_0$ and $\lim_{i\to\infty}\delta_i=0$. For
  each $\delta_i$ we may pick a pair of nodal domains, which we write
  \[
  a_i:=(N_{s(\delta_i)}^+, N_{t(\delta_i)}^-),
  \]
  such that $B_{\delta_i}(x)$ contains points of both nodal
  domains. Since the number of all possible nodal domain pairs is
  finite, there exists a pair which appears infinitely many times in
  the sequence $\{a_i\}$. We may hence choose this fixed pair
  $(N_s^+,N_t^-)$, where $s(\delta_{i_j})=s$ and $t(\delta_{i_j})=t$,
  for some subsequence $\{\delta_{i_j}\}$ such that
  $\lim_{j\to\infty}\delta_{i_j}=0$. It can be seen from this
  reasoning that the same pair occurs in any neighborhood
  $B_\delta(x)$, $\delta<\delta_0$.
%

  We shall next base our reasoning on some topological arguments. We
  write $\partial D_A = \{x\in\partial D: x \textrm{ is accessible
    from } D\}$,
  \[
  \partial N_{i,A}^+ = \{x\in\partial N_i^+: x \textrm{ is accessible from
  } N_i^+\},
\]
and correspondingly $\partial N_{j,A}^-$. By \cite{Newman} accessible
boundary points $\partial D_A$, $\partial N_{i,A}^+$, and $\partial
N_{j,A}^-$ are dense in $\partial D$, $\partial N_i^+$, and $\partial
N_j^-$, respectively.

We will describe a selection process which gives a pair of points
$x_1$ and $x_2$ such that $x_1,\, x_2\in \partial D_A\cap G$ and that
the associated spherical neighborhoods $B_\delta(x_1)$ and
$B_\delta(x_2)$, $\delta<\delta_0$, contain points of the same nodal
line $\partial N_s^+$, $s\in\{1,\ldots,M^+\}$ fixed. Moreover, it is
assumed that
$\overline{B}_\delta(x_1)\cap\overline{B}_\delta(x_2)=\emptyset$, and
that $\overline{B}_\delta(x_1),\,\overline{B}_\delta(x_2)\subset
G$. This procedure is as follows: We select a finite sequence of
points $\{x_l\}$, each $x_l\in\partial D_A\cap G$. As pointed out
earlier, for each $x_l$ there exists $\delta_0>0$ such that the
spherical neighborhood $B_\delta(x)$, $\delta<\delta_0$, contains
points of $N_s^+$ and $N_t^-$ for some $s\in\{1,\ldots,M^+\}$ and
$t\in\{1,\ldots, M^-\}$. Since the number of all possible nodal domain
pairs as described above is finite, after finite number of steps the
sequence $\{x_l\}$ will contain a pair of points, denoted $x_1$ and
$x_2$, which have the aforementioned properties.

We then select $x_3\in B_\delta(x_1)\cap\partial N_{s,A}^+$ and
$x_4\in B_\delta(x_2)\cap\partial N_{s,A}^+$.

We connect $x_1$ to $x_2$ by a cross-cut $\gamma_D$ in $D$, and $x_3$
to $x_4$ by a cross-cut $\gamma_{N_s^+}$ in $N_s^+$. We remark that
$x_3$, and analogously $x_4$, is accessible in $N_s^+$ with a line
segment (consult, e.g., Remark~3.3 in \cite{GraMar3}). Also $x_1$, and
analogously $x_2$, is accessible in $D$ with a line segment. We fix
such line segments to access the points $x_1,\,x_2,\,x_3$, and
$x_4$. In this way the line segments constitute part of the cross-cut
$\gamma_D$ and $\gamma_{N_s^+}$, respectively.

Since the boundary $\partial N_s^+$ is connected it is also
$\eps$-connected for every $\eps>0$. Hence for each $\eps>0$ the
points $x_1$ and $x_3$ can be joined by an $\eps$-chain
$\{a_1,\,\ldots,\,a_k\}\subset\partial N_s^+\cap G$ such that
\[
x_1 = a_1,\, a_2,\, \ldots, a_{k-1},\, a_k = x_3.
\]
We consider a collection of open balls
$\{B_{\frac{3}{2}\eps}(a_i)\}_{i=1}^k$, $a_i\in \partial N_s^+\cap G$,
such that $B_{\frac{3}{2}\eps}(a_i)\subset G$, and a domain $U_\eps^1$ which is
defined to be
\[
U_\eps^1 = \bigcup_{i=1}^kB_{\frac{3}{2}\eps}(a_i).
\]
Since $U_\eps^1$ is a domain there exists a Jordan arc,
$\gamma_{x_1x_3}^\eps$, connecting $x_1$ to $x_3$ in
$U_\eps^1$. Correspondingly, the points $x_2$ and $x_4$ can be joined
by an $\eps$-chain in $\partial N_s^+$ and we obtain a domain $U_\eps^2$
and a Jordan arc $\gamma_{x_2x_4}^\eps$ connecting $x_2$ to $x_4$ in
$U_\eps^2$.

It is worth noting that we have selected $\gamma_{x_1x_3}^\eps$ and
$\gamma_{x_2x_4}^\eps$ such that either of them does not intersect
$\gamma_D$ or $\gamma_{N_s^+}$, save the points $x_1$ and $x_2$, and
$x_3$ and $x_4$, respectively. This is possible because of the line
segment construction described above.

From the preceding Jordan arcs we obtain a Jordan curve $\Gamma^\eps$,
and by slight abuse of notation we write it as a product
\[
\Gamma^\eps=\gamma_{x_1x_3}^\eps\cdot\gamma_{N_s^+}\cdot\gamma_{x_2x_4}^\eps\cdot\gamma_D.
\]
The Jordan curve $\Gamma^\eps$ divides the plane into two disjoint
domains, and $\Gamma^\eps$ constitutes the boundary of both
domains. We consider the bounded domain, denoted by $T_\eps$, enclosed
by $\Gamma^\eps$. See Figure~\ref{Figure}.

We next deal with the Jordan domain $T_\eps$. There exists at least
one point $y\in T_\eps$ such that $u(y)<0$, i.e. $y\in N_{j_0}^-$ for
some fixed $j_0\in\{1,\ldots,M^-\}$. Assume
that this is not the case: then $u(x)\geq 0$ for every $x\in
T_\eps$. As $\gamma_D$ is part of the boundary $T_\eps$ contains also
points of $D$, and hence $u$ vanishes at such points. By Harnack's
inequality, Theorem~\ref{thm:Harnack}, $u\equiv 0$ in $T_\eps$. This
is, however, impossible since $\gamma_{N_s^+}$ is part of the boundary
of $T_\eps$, thus $u>0$ on a sufficiently small neighborhood of a
point in $\gamma_{N_s^+}$.

In an analogous way, it is possible to show that there exists a point
$z\in N_{j_0}^-\cap (G\setminus\overline{T}_\eps)$. We stress that it is
crucial that the selected points $z$ and $y$ belong to the same nodal
domain $N_{j_0}^-$. It is worth noting here that by the strong maximum
principle $\Gamma^\eps$ cannot enclose the nodal domain $N_{j_0}^-$
containing the point $y$, and therefore $G\setminus\overline{T}_\eps$
must also contain points of $N_{j_0}^-$. We then connect $z$ and $y$ in
$N_{j_0}^-$ by a Jordan arc $\gamma_{zy}$. Observe that $u(x)<0$ for every
$x\in \gamma_{zy}$.

The Jordan arc $\gamma_{zy}$ as a connected set intersects
$\Gamma^\eps$ at least at one point. We then distinguish the following
four possible cases for the point of intersection: The point of
intersection is contained in
\begin{enumerate}

\item[(1)] $\gamma_D$, 
\item[(2)] $\gamma_{N_s^+}$, 
\item[(3)] $\gamma_{x_1x_3}^\eps$, 
\item[(4)] $\gamma_{x_2x_4}^\eps$.
\end{enumerate}

In the case (1) and (2) we have reached a contradiction as $u(x)=0$
for every $x\in \gamma_D$ and $u(x)>0$ for every $x\in
\gamma_{N_s^+}$, respectively.

Consider the case (3) and case (4). We denote the point of
intersection by $x_\eps$ for every $\eps>0$. We can select an
appropriate subsequence $\{x_{\eps_j}\}_{j=1}^\infty$,
$\lim_{j\to\infty}\eps_j=0$, such that for each $j$ either
$x_{\eps_j}\in U_{\eps_j}^1$ or $x_{\eps_j}\in U_{\eps_j}^2$. We
assume, without loss of generality, that $x_{\eps_j}\in
U_{\eps_j}^1$. The sequence $\{x_{\eps_j}\}$ is clearly bounded, and
hence there exists a subsequence, still denoted $\{x_{\eps_j}\}$, such
that $\lim_{j\to\infty}x_{\eps_j} = x_0$. Observe that each
\[
x_{\eps_j}\in B_{\frac{3}{2}\eps_j}(a_m)
\]
for some $a_m\in \partial N_s^+\cap G$ in the $\eps_j$-chain. We note
that $u(a_m)=0$. Moreover, if there existed $\delta_0$ and a
subsequence, still denoted $\{x_{\eps_j}\}$, such that
\[
|u(x_{\eps_j})| \geq \delta_0 >0
\]
for every $x_{\eps_j}$, this would contradict with uniform continuity
of $u$ (note that $u$ is uniformly continuous on compact subsets of
$G$). We hence have that
\[
u(x_0)=\lim_{j\to\infty}u(x_{\eps_j})=0.
\]
In conclusion, we have reached a contradiction since $u(x_0)=0$ but,
on the other hand, $x_0\in \gamma_{zy}$ and hence $u(x_0)<0$.

All four cases (1)--(4) lead to a contradiction. Hence antithesis (A)
is false, thus the claim follows.
\end{proof}

\begin{figure}[!htbp] 
\centering
\includegraphics[width=0.8\textwidth]{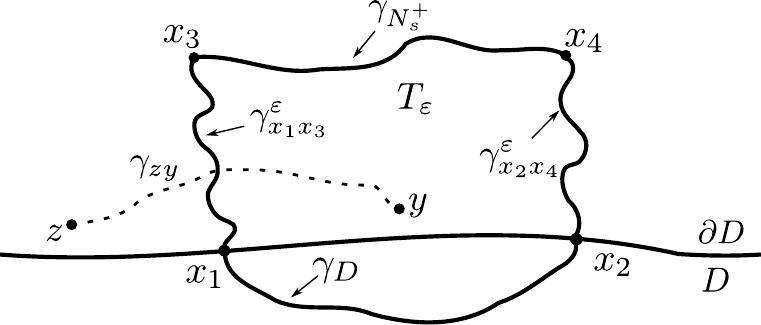}
\caption{
 Jordan domain $T_\eps$ and Jordan curve $\gamma_{zy}$ (dotted line)
 connecting $z$ to $y$ in $N_{j_0}^-$.}
\label{Figure}
 \end{figure}


Let us discuss our extra assumption in Theorem~\ref{thm:uniquecont}.

\begin{remark}
  We mention that the extra assumption, for all $x\in G$ there exists
  $r_x>0$ such that for all $r\leq r_x$ the set $\{z\in
  \overline{B}_r(x):\ u(z)=0\}$ is connected, could be replaced with
  the assumption that the set has finitely many components.
\end{remark}

\begin{remark}
  The extra assumption is closely related to the concept of
  topological monotonicity or quasi-monotonicity introduced by Whyburn
  in \cite{Whyburn}; we also refer to Astala et al.~\cite[20.1.1,
  pp. 530 ff]{Astala}.

  Let us try to clarify the role of this assumption in the proof of
  the preceding theorem. We fix there the point $x_1\in \partial
  D_A\cap G$, its neighborhood $B_\delta(x_1)$, and the point $x_3\in
  B_\delta(x_1)\cap\partial N_{s,A}^+$ (similarly $x_2\in \partial D_A\cap
  G$, $B_\delta (x_2)$, and $x_4\in B_\delta(x_2)\cap\partial
  N_{s,A}^+$). At $x_1$ and $x_3$ the function $u$ is known to
  vanish. Using the extra assumption in
  Theorem~\ref{thm:uniquecont}, we may conclude that there indeed
  exists a continuum $\mathcal{C}_\delta$ that connects $x_1$ to $x_3$
  \emph{in} $B_\delta(x_1)$ so that $u(x)=0$ for every
  $x\in\mathcal{C}_\delta$, or in other words, that the set
  $B_\delta(x_1)\cap\partial N_{s,A}^+$ is connected.

  In \cite[Remark 4.4]{GraMar3} possible spiral-like behavior that
  illustrates the role of our extra assumption is discussed in more
  detail.
\end{remark}


\end{document}